\newcommand{\N}{\mathbb{N}}
\newcommand{\R}{\mathbb{R}}
\newcommand{\Z}{\mathbb{Z}}
\newcommand{\E}{\mathbb{E}}
\newcommand{\set}[1]{\left\{#1\right\}}
\newcommand{\e}{\epsilon}
\renewcommand{\P}{\mathbb{P}}
\newcommand{\supp}{\mathop{\mathrm{supp}}\nolimits}
\newcommand{\diam}{\mathop{\mathrm{diam}}\nolimits}
\renewcommand{\d}{\mathop{\mathrm{d}}\nolimits}
\newcommand{\I}{\mathcal{I}}
\newtheorem{thm}{Theorem}
\newtheorem*{thm*}{Theorem}
\newtheorem{lem}{Lemma}
\newtheorem*{prop*}{Proposition}
\theoremstyle{definition}
\newtheorem{rem}{Remark}
\newtheorem{ass}{Assumption}
\begin{document}

\title{From the Lifshitz tail to the quenched survival asymptotics in the trapping problem}
\author{Ryoki Fukushima\footnote{JSPS Research Fellow (PD), Division of Mathematics, 
Graduate School of Science, 
Kyoto University, Kyoto, 606-8502, Japan; 
E-mail: fukusima@math.kyoto-u.ac.jp} }
\date{\today}
\maketitle

\begin{abstract}
The survival problem for a diffusing particle moving among random traps is considered. 
We introduce a simple argument to derive the quenched asymptotics of the survival probability 
from the Lifshitz tail effect for the associated operator. 
In particular, the upper bound is proved in fairly general settings 
and is shown to be sharp in  the case of the Brownian motion in the Poissonian obstacles. 
As an application, we derive the quenched asymptotics for the Brownian motion in 
traps distributed according to a random perturbation of the lattice. \\
\textbf{Keywords}: Trapping problem; random media; survival probability; Lifshitz tail\\
\textbf{MSC 2000 subject classification}: 60K37; 60G17; 82D30; 82B44\\
\end{abstract} 

% main text
%section1%%%%%%%%%%%%%%%%%%%%%%%%%%%%%%%%%%%%%%%%%%%
\section{Introduction and main results}\label{intro}
In this article, we consider a diffusing particle moving in random traps. 
The motion of the particle is given by a simple random walk or a Brownian motion 
and it is killed at a certain rate when it stays in a trap. 
Such a model appears in various models in chemical physics and also has some relations to 
the quantum physics in disordered media. 
We refer to the papers by Havlin and Ben-Avraham~\cite{HBA87} and den Hollander 
and Weiss~\cite{dHW94} for reviews on this model. 

The mathematical discription of the trapping model is given by the sub-Markov process with generator 
\begin{equation}
	H_{\omega} = -\kappa \Delta + V_{\omega}, \label{op}
\end{equation}
where $\Delta$ is the Laplacian on $L^2(\R^d)$ or $l^2(\Z^d)$ and $(V_{\omega}, \P)$ 
a nonnegative, stationary, and ergodic random field. Heuristically, the height of $V_{\omega}$ 
corresponds to the rate of killing. 
Let us write $(\{ X_t \}_{t \ge 0}, \{ P_x \}_{x \in \R^d\textrm{ or }\Z^d  })$ 
for the Markov process generated by $-\kappa\Delta$. 
One of the quantity of primary interest concerning this process is 
the survival probability of the particle up to a fixed time $t$, which is expressed as 
\begin{equation}
	u_{\omega}(t, x) = E_x \left[\exp\set{-\int_0^t V_{\omega}(X_s)\,ds}\right]. \label{F-K}
\end{equation}
From this expression, we can identify the survival probability as 
the Feynman-Kac representation of a solution of the initial value problem 
\begin{equation}
\begin{split}
	\partial_t u(t, x) &= \kappa \Delta u(t, x) - V_{\omega}(x) u(t, x) \quad {\rm for }
	\quad (t, x) \in (0, \infty) \times \R^d, \\
	u(0,\, \cdot\,) &\equiv 1. 
\end{split}
\end{equation}
Therefore, it is natural to expect that the long time asymptotics of the survival probability 
gives some information about the spectrum of $H_{\omega}$ around the ground state energy
and vice versa. 
This idea has been made rigorous first by Fukushima~\cite{Fuk73}, Nakao~\cite{Nak77}, and 
Pastur~\cite{Pas77} (with the analysis of some concrete examples) in the following sense: 
from the annealed long time asymptotics of the survival 
probability, one can derive the decay rate of the integrated density of states around the 
ground state energy. Their arguments are based on the fact 
that the Laplace transform of the integrated density of states can be expressed as the 
annealed survival probability for the process conditioned to come back to the starting point
at time $t$. 
Therefore, the above implication follows by an appropriate Tauberian theorem 
and, since there is the corresponding Abelian theorem (see e.g.~Kasahara~\cite{Kas78}), 
the converse is also true. 

The aim of this article is to study a relation between the \emph{quenched} asymptotics 
of $u_{\omega}(t,x)$ and 
the integrated density of states. Let us start by recalling the notion of the integrated 
density of states. To define it, we assume the following: 
\begin{ass}\label{Kato}
In the continuous setting, $V_{\omega}$ belongs to the local Kato class $K_{d, {\rm loc}}$. 
(See~\cite{CL90} or~\cite{Szn98} for the definition of $K_{d, {\rm loc}}$.)@
\end{ass}
\noindent
This assumption is sufficient to ensure that $H_{\omega}$ 
is measurable in $\omega$ as an operator. 
For the notion of measurability of operators, 
we refer to a lecture notes by Kirsch~\cite{Kir89}. 
(In fact, this is slightly stronger but we need this to utilize a uniform bound for 
the semigroup $e^{-t H_{\omega}}$ in the proof.) 
Under the above assumption, the integrated density of states of 
$H_{\omega}$ is defined as follows: 
\begin{equation}
	N^*(\lambda) = \lim_{R \to \infty} \frac{1}{(2R)^d}
	\E\bigl[\#\bigl\{ k \in \N ;\lambda^*_{\omega,\,k}\bigl((-R, R)^d\bigr) \le \lambda \bigr\}\bigr], 
	\quad *={\rm D} \textrm{ or }{\rm N},  
\end{equation}
where $\lambda^{\rm D}_{\omega,\,k}\bigl((-R, R)^d\bigr)$ 
(resp.~$\lambda^{\rm N}_{\omega,\,k}\bigl((-R, R)^d\bigr)$) is the 
$k$-th smallest eigenvalue of $H_{\omega}$ in $(-R, R)^d$ with the Dirichlet (resp.~Neumann) 
boundary condition. The existence of the limit in the right hand 
side can be proved by superadditivity (resp.~subadditivity). 

Now we state our first result. 
\begin{thm}\label{thm-upper}
Suppose that Assumptions~\ref{Kato} holds and that there exists 
a regularly varying function $\phi$ 
with index $L > 0$ such that the integrated density of states $N^{\rm D}$ associated with the operator $H_{\omega}$
in~\eqref{op} admits the upper bound 
\begin{equation}
	N^{\rm D}(\lambda) \le \exp\set{- \phi(1/\lambda) (1+o(1)) }
	\quad \mathrm{as} \quad \lambda \to 0. \label{ass-upper}
\end{equation}
Then, for any fixed $x \in \R^d$, 
\begin{equation}
	\P \textrm{-}\mathrm{a.s.}\quad 
	u_{\omega}(t, x) \le \exp\set{- t / \psi (d\log t) (1+o(1))} \quad \mathrm{as} \quad t \to \infty, 
	\label{eq-upper}
\end{equation} 
where $\psi$ is the asymptotic inverse of $\phi$. 
\end{thm}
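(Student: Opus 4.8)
The plan is to confine the process $X$ to a large cube centred at $x$, to control the confined survival probability by the bottom of the Dirichlet spectrum of $H_\omega$ in that cube, and then to convert the Lifshitz-tail hypothesis~\eqref{ass-upper} into an almost-sure lower bound on that eigenvalue along a sparse sequence of times via Borel--Cantelli.

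\emph{Confinement and spectral reduction.} Fix $x$, set $B_t := x+(-t,t)^d$, and let $\tau$ be the exit time of $X$ from $B_t$. Since $V_\omega\ge 0$,
\[ u_\omega(t,x)\le E_x\left[\exp\set{-\int_0^t V_\omega(X_s)\,ds};\tau>t\right]+P_x(\tau\le t). \]
A Gaussian exit estimate bounds $P_x(\tau\le t)$ by $Ce^{-ct}$, which will be negligible. The first term equals $\int_{B_t}e^{-tH_\omega^{B_t,{\rm D}}}(x,y)\,dy$, where $e^{-tH_\omega^{B_t,{\rm D}}}$ is the semigroup of $H_\omega$ in $B_t$ with Dirichlet boundary condition; by Cauchy--Schwarz, the semigroup property, and the expansion in Dirichlet eigenfunctions (factoring out $e^{-(t-1)\lambda^{\rm D}_{\omega,1}(B_t)}$ and bounding the residual time-$2$ heat kernel at the diagonal by the free one, where Assumption~\ref{Kato} and the uniform semigroup bound mentioned after it are used), it is at most $C\,t^{d/2}e^{-(t-1)\lambda^{\rm D}_{\omega,1}(B_t)}$. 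The lattice case is handled in the same way.

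\emph{The principal eigenvalue is not too small.} By stationarity of $(V_\omega,\P)$, Markov's inequality, and superadditivity of the Dirichlet eigenvalue counting function, $\P(\lambda^{\rm D}_{\omega,1}(B_t)\le\lambda)\le\E[\#\set{k:\lambda^{\rm D}_{\omega,k}((-t,t)^d)\le\lambda}]\le(2t)^d N^{\rm D}(\lambda)$. For a fixed $\e>0$ I would then take $\lambda(t):=(1-\e)/\psi(d\log t)$, which tends to $0$. Since $\psi$ is regularly varying of index $1/L$ with $\psi(y)\to\infty$, $\phi$ is regularly varying of index $L$, and $\phi\circ\psi$ is asymptotic to the identity, one gets $\phi(1/\lambda(t))\sim(1-\e)^{-L}d\log t$, so~\eqref{ass-upper} gives
\[ \P\bigl(\lambda^{\rm D}_{\omega,1}(B_t)\le\lambda(t)\bigr)\le(2t)^d\exp\bigl\{-(1-\e)^{-L}d\log t\,(1+o(1))\bigr\}=t^{-\gamma+o(1)}\quad(t\to\infty), \]
with $\gamma:=d\bigl((1-\e)^{-L}-1\bigr)>0$.

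\emph{Borel--Cantelli and interpolation.} Along the sparse sequence $t_n:=\exp(\sqrt n)$, which satisfies $t_{n+1}/t_n\to 1$, the last bound is summable, so $\P$-a.s.\ $\lambda^{\rm D}_{\omega,1}(B_{t_n})>\lambda(t_n)$ for all large $n$. Inserting this into the spectral bound above, the prefactor $t_n^{d/2}$ and the exit term $Ce^{-ct_n}$ being of smaller order than $\exp\set{-t_n/\psi(d\log t_n)}$ because $\psi(d\log t_n)$ is slowly varying in $t_n$ and tends to infinity, gives $u_\omega(t_n,x)\le\exp\set{-(1-2\e)t_n/\psi(d\log t_n)}$ for large $n$. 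Since $u_\omega(\cdot,x)$ is nonincreasing and $t\mapsto t/\psi(d\log t)$ is regularly varying of index $1$, interpolating between consecutive $t_n$ costs only a factor $1+o(1)$ in the exponent, so $u_\omega(t,x)\le\exp\set{-(1-3\e)t/\psi(d\log t)}$ for all large $t$, $\P$-a.s.; intersecting the corresponding almost-sure events over $\e\in\set{1/m:m\in\N}$ and letting $m\to\infty$ yields~\eqref{eq-upper}.

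\emph{Main obstacle.} I expect the delicate part to be the regular-variation bookkeeping in the last two steps. Because the union bound over the cube is only polynomial in $t$, the exponent of $t$ in the Borel--Cantelli sum is negative for every $\e>0$ but vanishes as $\e\downarrow 0$, which forces a sub-polynomially growing sequence such as $t_n=\exp(\sqrt n)$; one must nonetheless check that, after composing with $\phi\circ\psi$, the surviving constant in front of $\log t$ is exactly $d$, not something larger, so that the resulting rate matches~\eqref{eq-upper}. The other point requiring care is the spectral estimate, where only a polynomial --- rather than exponential --- prefactor in $t$ is affordable, which is precisely why the uniform semigroup bound is invoked.
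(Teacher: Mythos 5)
Your proposal is correct and follows essentially the same route as the paper: confinement to the box $(-t,t)^d$, a spectral bound of the form $C t^{d/2}e^{-\lambda^{\rm D}_{\omega,1}t}$ for the confined survival probability, the Dirichlet--Neumann bracketing inequality $\P(\lambda^{\rm D}_{\omega,1}\le\lambda)\le(2t)^d N^{\rm D}(\lambda)$ combined with~\eqref{ass-upper}, and Borel--Cantelli plus interpolation via regular variation. The only (immaterial) differences are that the paper cites Sznitman's bound (3.1.9) rather than rederiving the eigenfunction expansion, and uses the geometric sequence $t_k=e^k$ with interpolation through monotonicity of the principal eigenvalue rather than your denser sequence with monotonicity of $u_\omega(\cdot,x)$.
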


The following assumptions are necessary only for the lower bound. 
\begin{ass}\label{moment} (\emph{Moment condition})
There exists $\alpha > 0$ such that 
\begin{equation}
	\E\biggl[\sup_{x \in [0,1]^d}\exp\{V_{\omega}(x)^{\alpha}\}\biggr] < \infty. 
\end{equation}
\end{ass}
\begin{ass}\label{correlation} (\emph{Short range correlation})
There exists $\beta > 0$ and $r_0 > 0$ such that for $\lambda > 0$ and 
boxes $A_k \subset \R^d$ or $\Z^d$ $(1 \le k \le n)$ with 
$\min_{k \neq l}{\rm dist}(A_k, A_l) > r \ge r_0$ and $\max_{1 \le k \le n} \diam (A_k) < r$, 
\begin{equation}
	\biggl| \P\biggl( \bigcap_{1 \le k \le n} E_k(\lambda) \biggr) 
	- \P(E_1(\lambda)) \P\biggl( \bigcap_{2 \le k \le n} E_k(\lambda) \biggr) \biggr| 
	< \exp \{-r^{\beta}\}, \label{ass-3}
\end{equation}
where $E_k(\lambda) = \{\lambda^{\rm N}_{\omega,\, 1}(A_k) \le \lambda\}$. 
\end{ass}

Now we are ready to state our second result. 
\begin{thm}\label{thm-lower}
Suppose that Assumptions~\ref{Kato}--~\ref{correlation} hold and 
that there exists a regularly varying function $\phi$ 
with index $L > 0$ such that the integrated density of states $N^{\rm D}$ associated with the operator 
$H_{\omega}$ in~\eqref{op} admits the lower bound 
\begin{equation}
	N^{\rm D}(\lambda) \ge \exp\set{- \phi(1/\lambda) (1+o(1)) }
	\quad \mathrm{as} \quad \lambda \to 0. \label{ass-lower}
\end{equation}
Then, there exists a constant $c_1 > 1$ such that for any fixed $x \in \R^d$,
\begin{equation}
	\P \textrm{-}\mathrm{a.s.}\quad 
	u_{\omega}(t, x) \ge \exp\set{- c_1 t / \psi (d\log t) (1+o(1))} \quad \mathrm{as} \quad t \to \infty, 
\end{equation} 
where $\psi$ is the asymptotic inverse of $\phi$. 
\end{thm}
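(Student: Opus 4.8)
The plan is to implement, at the quenched level, the competition between the scarcity of ``clearings'' and the cost of the path reaching one: the particle should, in a short initial time, travel to a box $A$ of moderate side on which the principal Dirichlet eigenvalue $\lambda^{\rm D}_{\omega,1}(A)$ of $H_\omega$ is of order $\lambda$, and then remain in $A$ up to time $t$, which costs roughly $e^{-\lambda t}$. Fix $\theta\in(0,d)$ and set $1/\lambda(t):=\psi(\theta\log t)$, so that $\phi(1/\lambda(t))=\theta\log t\,(1+o(1))$ by the asymptotic-inverse property and, by regular variation of $\psi$, $\lambda(t)\,t=(d/\theta)^{1/L}\,t/\psi(d\log t)\,(1+o(1))$; this already produces a constant $(d/\theta)^{1/L}>1$, and the point of taking $\theta$ strictly below $d$ is precisely that it will keep the search for the clearing cheap. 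Accordingly I take a box side $\ell(t)$ that is polynomial in $\phi(1/\lambda(t))$ — hence only poly-logarithmic in $t$ — and a search box $Q_{R(t)}(x)$ of side $R(t)=\ell(t)\exp\{(1+\varepsilon)\phi(1/\lambda(t))/d\}$, which for small $\varepsilon>0$ satisfies $R(t)\le t^{1-\delta}$ for some $\delta=\delta(\theta,\varepsilon)>0$ because $(1+\varepsilon)\theta/d<1$.

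The first ingredient is a one-box lower bound: for all small $\lambda$,
\[
  \P\bigl(\lambda^{\rm D}_{\omega,1}(Q_{\ell(\lambda)})\le\lambda\bigr)\ \ge\ \exp\{-\phi(1/\lambda)(1+o(1))\}.
\]
I would (i) use Dirichlet--Neumann bracketing together with the super-/sub-additivity of the finite-volume eigenvalue counting functions $N^*_\omega(Q_\ell;\lambda):=\#\{k:\lambda^*_{\omega,k}(Q_\ell)\le\lambda\}$ to tie $\tfrac1{|Q_\ell|}\E[N^{\rm D}_\omega(Q_\ell;\lambda)]$ to the hypothesis $N^{\rm D}(\lambda)\ge e^{-\phi(1/\lambda)(1+o(1))}$; (ii) invoke Assumption~\ref{correlation} to upgrade the (a priori unquantified) monotone convergence $\tfrac1{|Q_\ell|}\E[N^{\rm D}_\omega(Q_\ell;\lambda)]\uparrow N^{\rm D}(\lambda)$ to a genuine rate, which lets $\ell(\lambda)$ be taken polynomial in $\phi(1/\lambda)$; and (iii) use the deterministic Weyl-type bound $N^{\rm D}_\omega(Q_\ell;\lambda)\le\#\{\mathrm{spec}(-\kappa\Delta^{\rm D}_{Q_\ell})\cap[0,\lambda]\}\le C(1+\sqrt{\lambda/\kappa}\,\ell)^d$, valid $\P$-a.s.\ because $V_\omega\ge0$, to convert the resulting lower bound on $\E[N^{\rm D}_\omega(Q_\ell;\lambda)]$ into one on $\P(N^{\rm D}_\omega(Q_\ell;\lambda)\ge1)$; the Weyl factor is only polynomial in $1/\lambda$, hence absorbed into the $o(1)$ in the exponent since $\log(1/\lambda)=o(\phi(1/\lambda))$.

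Next, I would tile $Q_{R(t)}(x)$ by $M\ge t^{(1+\varepsilon)\theta-o(1)}$ boxes $A_i$ of side $\ell(t)$ kept at mutual distance $(\log t)^{2/\beta}$; then $Mp\ge t^{\varepsilon\theta-o(1)}\to\infty$ with $p:=\P(\lambda^{\rm D}_{\omega,1}(A_i)\le\lambda(t))\ge t^{-\theta(1+o(1))}$, while Assumption~\ref{correlation} makes the events $\{\lambda^{\rm D}_{\omega,1}(A_i)\le\lambda(t)\}$ independent up to a total error $\le Me^{-(\log t)^2}$, so the probability that none of the $A_i$ is a clearing is $\le e^{-t^c}$ for some $c>0$. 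A Borel--Cantelli step along $t_n=e^{\sqrt n}$ — chosen so that $t_{n+1}/t_n\to1$ and $\psi(d\log t_n)/\psi(d\log t_{n+1})\to1$ — together with monotonicity of $t\mapsto u_\omega(t,x)$ and of $t\mapsto t/\psi(d\log t)$ to interpolate, yields $\P$-a.s.: for all large $t$ there is a box $A=A(t,\omega)$ of side $\ell(t)$ with $\mathrm{dist}(x,A)\le R(t)$ and $\lambda^{\rm D}_{\omega,1}(A)\le\lambda(t)$. I then bound $u_\omega(t,x)$ below by restricting the Feynman--Kac expectation to paths that (a) travel from $x$ to $A$ inside a width-one tube during $[0,s]$, $s:=R(t)$, (b) re-centre within $A$ during $[s,s+\ell(t)^2]$, and (c) stay in $A$ afterwards. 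Contributions (a) and (b) are $\ge\exp\{-c(R(t)^2/s+s)\}\exp\{-(s+\ell(t)^2)\sup_{B(0,2t)}V_\omega\}$, which is $\exp\{-o(t/\psi(d\log t))\}$ since $R(t)\le t^{1-\delta}$ and $\sup_{B(0,2t)}V_\omega=O((\log t)^{1/\alpha})$ $\P$-a.s.\ by Assumption~\ref{moment} and Borel--Cantelli; contribution (c) equals $E_y[e^{-\int_0^{T}V_\omega(X_u)\d u};\tau_A>T]\ge c\,\langle\varphi_1,\mathbf 1\rangle_{L^2(A)}\varphi_1(y)\,e^{-\lambda(t)T}$ with $T=t-s-\ell(t)^2$ and $\varphi_1$ the Dirichlet ground state on $A$, and aiming step (b) at the set $\{\varphi_1\ge\tfrac12\|\varphi_1\|_{\infty}\}$ together with a barrier argument showing that the ground state of a box of side $\asymp1/\sqrt\lambda$ with eigenvalue $\le\lambda$ cannot concentrate on less than a fixed fraction of the box makes the prefactor $\langle\varphi_1,\mathbf 1\rangle\varphi_1(y)$ bounded below by a power of $\ell(t)$. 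Multiplying, $u_\omega(t,x)\ge\exp\{-C\lambda(t)t\,(1+o(1))\}=\exp\{-c_1 t/\psi(d\log t)(1+o(1))\}$ with $c_1=C(d/\theta)^{1/L}>1$.

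I expect the heart of the matter to be Step~(ii): extracting a one-box lower bound with a \emph{polynomially bounded} box size from the IDS hypothesis. Without a rate for the finite-volume convergence, $\ell(\lambda)$ could grow so fast that $Q_{R(t)}(x)$ becomes exponentially large in $t$ and the travel cost in (a) becomes comparable to $t$ itself, which would wreck the estimate; it is exactly to prevent this that Assumption~\ref{correlation} is needed (it also supplies the near-independence in the tiling step). A secondary technical point is the non-concentration estimate for the Dirichlet ground state used in step (c); the measurability of $H_\omega$ and the uniform semigroup bound guaranteed by Assumption~\ref{Kato} enter when justifying the bracketing, the Feynman--Kac representation, and the integrability of the counting functions.
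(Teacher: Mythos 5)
You have correctly identified the crux, but your proposed resolution of it does not work. The one\mbox{-}box lower bound $\P(\lambda^{\rm D}_{\omega,1}(Q_\ell)\le\lambda)\ge e^{-\phi(1/\lambda)(1+o(1))}$ cannot be extracted the way you describe: by superadditivity the normalized \emph{Dirichlet} counting function satisfies $\frac{1}{|Q_\ell|}\E[N^{\rm D}_\omega(Q_\ell;\lambda)]\le N^{\rm D}(\lambda)$ for every finite $\ell$, i.e.\ the finite-volume quantity sits on the \emph{wrong} side of the hypothesis, and turning this around requires a quantitative rate for the convergence $\uparrow N^{\rm D}(\lambda)$ that nothing in the hypotheses supplies. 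In particular Assumption~\ref{correlation} cannot do this job: it is a decorrelation estimate for the events $\{\lambda^{\rm N}_{\omega,1}(A_k)\le\lambda\}$ of well-separated boxes and says nothing about the boundary-condition error in Dirichlet bracketing. The paper avoids the issue entirely by working with the \emph{Neumann} counting function, whose normalized expectation is subadditive and hence $\ge N^{\rm N}(\lambda)\ge N^{\rm D}(\lambda)$ for \emph{every} box size, with no rate needed; combined with the Weyl bound (your step (iii), which is fine) this gives $\P(\lambda^{\rm N}_{\omega,1}(Q_\ell)\le\lambda)\ge c_4^{-1}N^{\rm D}(\lambda)$ for boxes of side $\asymp\lambda^{-1/2}$, i.e.\ polylogarithmic in $t$. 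One then converts the Neumann bound into a Dirichlet bound \emph{on the same box} by multiplying the Neumann ground state by a cutoff (Lemma~\ref{lem2}); this conversion is exactly where the constant $c_1>1$ in the statement originates, so your hope of a loss-free Dirichlet one-box bound is also at odds with the form of the theorem. Note too that since Assumption~\ref{correlation} is phrased for Neumann events, your tiling/Borel--Cantelli step must likewise be run with Neumann eigenvalues first and converted to Dirichlet afterwards.

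Two smaller points. Your search geometry (sub-polynomial window $R(t)\le t^{1-\delta}$ with $\theta<d$, tube estimate for the travel) is a legitimate variant of the paper's choice (window of radius $t/(\log t)^M$, Gaussian heat-kernel lower bound), and both give a travel cost $o(t/\psi(d\log t))$ once Assumption~\ref{moment} controls $\sup V_\omega$. But the ``barrier argument showing that the ground state \ldots cannot concentrate on less than a fixed fraction of the box'' is neither needed nor obviously true (the ground state can concentrate where $V_\omega$ happens to be small); the standard substitute is the uniform bound $\|\varphi_1\|_\infty\le c\lambda^{d/4}$ together with $\|\varphi_1\|_2=1$ and a pigeonhole choice of a unit sub-box carrying mass $\ge|Q_\ell|^{-1}$, which yields a polylogarithmic prefactor and suffices.
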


\begin{rem}\label{Lif-exp}
The exponential behavior~\eqref{ass-upper} and~\eqref{ass-lower} of the integrated density of 
states is called the ``Lifshitz tail effect'' (cf.~\cite{Lif65}) and is typical for the trapping Hamiltonian 
$H_{\omega}$. 
The index $L$ is called ``Lifshitz exponent''. Using these terminologies, we can summarize 
our results as follows: if we have the Lifshitz tail effect with exponent $L>0$, 
then $\log u_{\omega}(t,x)$ behaves like $-t/(\log t)^{1/L+o(1)}$. 
\end{rem}

Finally we briefly comment on the relation to early studies on the quenched 
asymptotics of $u_{\omega}(t,x)$. We first give historical remarks. 
The first result in this direction has been obtained for the Brownian motion in 
the Poissonian traps by Sznitman~\cite{Szn93c} (see also~\cite{Szn98}): 
\begin{equation}
	\P \textrm{-a.s.}\quad 
	u_{\omega}(t, 0) = \exp\set{-c t/(\log t)^{2/d} (1+o(1))}
	\quad \textrm{as} \quad t \to \infty, \label{Szn}
\end{equation} 
with an explicit constant $c > 0$. 
The same asymptotics has also been proved for the discrete counterpart 
(the simple random walk in Bernoulli traps) by Antal~\cite{Ant95}. 
These results are consistent to ours since in these cases, the Lifshitz exponent 
is known to be $d/2$~\cite{Nak77, RW79}. 
Later, Biskup and K\"{o}nig~\cite{BK01b} considered the simple random walk in i.i.d.~traps 
with more general distributions. A representative example in their framework is 
\begin{equation}
	\P(V_{\omega}(0) < v) = \exp\set{-v^{-\gamma+o(1)}}
	\quad {\rm as} \quad v \to 0
\end{equation}
for some $\gamma \in (0, \infty)$. 
For such a model, they proved the quenched asymptotics 
\begin{equation}
	\P \textrm{-a.s.}\quad 
	u_{\omega}(t, 0) = \exp\set{- \tilde{\chi} t/r(t) (1+o(1))}
	\quad \textrm{as} \quad t \to \infty \label{BK}
\end{equation} 
with a constant $\tilde{\chi}>0$ described by a certain variational problem and 
a function $r(t)=(\log t)^{2/(d+2\gamma)+o(1)}$ $(t \to \infty)$ which is determined 
by a certain scaling assumption. 
It is remarkable that they also discussed the annealed asymptotics and as a consequence, 
the Lifshitz tail effect with the Lifshitz exponent $(d+2\gamma)/2$ was proved. 
Hence the relation we mentioned in Remark~\ref{Lif-exp} has already appeared in this 
special class. 

Next, we comment on some technical points. The lower bound 
(Theorem \ref{thm-lower}) is a slight modification of that of Theorem~4.5.1 
in p.196 of~\cite{Szn98} 
and not genuinely new. We include it for the completeness and to use in an application 
given in Section~\ref{PL}. 
On the other hand, the upper bound (Theorem~\ref{thm-upper}) contains some novelties. 
Besides the generality of the statement, our proof simplifies the existing arguments. 
To be more precise, our proof contains no \emph{localizing} argument which all the proofs 
of above results rely on, see e.g.~Lemma 4.6 in~\cite{BK01b}. 
We will see in Section~\ref{Poisson} that our result indeed gives a simple proof 
of the quenched asymptotics for the Brownian motion in the Poissonian obstacles. 

% 
%section 3%%%%%%%%%%%%%%%%%%%%%%%%%%%%%%%%%%%%%%%
\section{Proof of the upper bound}\label{sect-upper}
We take $\kappa = 1/2$ and $x=0$ in the proof. The extension to general $\kappa $ and $x$ are verbatim. 
Also, we give the proof only for the continuous setting. The proof of the discrete case follows 
by the same argument. 
We begin with the following general upper bounds for $u_{\omega}(t,x)$ 
in terms of the principal eigenvalue. 
%lemma1%%%%%%%%%%%%%%%%%%%%%%%%%%%%%%%%%%%%%%%%%%%
\begin{lem}\label{lem-upper}
Under Assumption~\ref{Kato}, there exist constants $c_2, c_3 > 0$ such that 
\begin{equation}
	u_{\omega}(t, 0) \le c_2 (1+(\lambda^{\rm D}_{\omega,\, 1}\bigl((-t, t)^d\bigr) t)^{d/2}) 
	\exp\set{ - \lambda^{\rm D}_{\omega,\, 1}\bigl((-t, t)^d\bigr) t} + e^{-c_3 t}. \label{cont-spec}
\end{equation}
\end{lem}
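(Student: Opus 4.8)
The plan is to bound $u_\omega(t,0)$ by splitting the Feynman--Kac expectation according to whether the path stays in the box $B_t=(-t,t)^d$ up to time $t$ or exits it. On the exit event, we use a crude bound: the Brownian motion started at $0$ is very unlikely to travel distance $t$ in time $t$, so $P_0(\sup_{s\le t}|X_s|\ge t)\le e^{-c_3 t}$ by a standard Gaussian/reflection estimate, and since the integrand $\exp\{-\int_0^t V_\omega(X_s)\,ds\}$ is bounded by $1$, this contributes the additive error $e^{-c_3 t}$. On the complementary event, the path never leaves $B_t$, so we may replace $X$ by the process killed on exiting $B_t$; this is exactly the semigroup $e^{-tH_\omega^{\mathrm D}(B_t)}$ of the Dirichlet operator on the box, and the surviving mass is
\begin{equation}
	E_0\!\left[\exp\set{-\int_0^t V_\omega(X_s)\,ds}; \sup_{s\le t}|X_s|<t\right]
	= \bigl(e^{-tH_\omega^{\mathrm D}(B_t)}\mathbf 1\bigr)(0)
	\le \bigl\| e^{-tH_\omega^{\mathrm D}(B_t)}\bigr\|_{1\to\infty}^{?}
\end{equation}
— more precisely we want the pointwise value of $e^{-tH_\omega^{\mathrm D}(B_t)}\mathbf 1$ at the origin, which we control by an eigenfunction expansion.

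The core estimate is therefore a spectral bound on $\bigl(e^{-tH^{\mathrm D}_\omega(B_t)}\mathbf 1\bigr)(0)$. Writing the spectral decomposition $e^{-tH^{\mathrm D}_\omega(B_t)}=\sum_k e^{-\lambda_k t}\,\varphi_k\otimes\varphi_k$ with $\lambda_k=\lambda^{\mathrm D}_{\omega,k}(B_t)$, one gets $\bigl(e^{-tH^{\mathrm D}_\omega(B_t)}\mathbf 1\bigr)(0)\le e^{-\lambda_1 t/2}\,\bigl(e^{-tH^{\mathrm D}_\omega(B_t)/2}\mathbf 1\bigr)(0)^{1/2}\cdot(\text{something})$, but the cleanest route is: pull out one factor $e^{-\lambda_1 t}$ and bound the remaining trace-type sum. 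Concretely, split the semigroup as $e^{-tH}=e^{-(t-1)H}e^{-H}$; estimate $\|e^{-H}\mathbf 1\|_\infty\le \|e^{-H}\|_{1\to\infty}\cdot|B_t|$ is too lossy, so instead use $\bigl(e^{-tH}\mathbf 1\bigr)(0)=\sum_k e^{-\lambda_k t}\varphi_k(0)\langle\varphi_k,\mathbf 1\rangle\le \bigl(\sum_k e^{-\lambda_k t}\varphi_k(0)^2\bigr)^{1/2}\bigl(\sum_k e^{-\lambda_k t}\langle\varphi_k,\mathbf 1\rangle^2\bigr)^{1/2}$; the first factor is the heat kernel $p^{B_t}_{t}(0,0)^{1/2}\le p_t(0,0)^{1/2}=(2\pi t)^{-d/4}$ by domain monotonicity and the Feynman--Kac lower bound on the free kernel, and the second factor is $\bigl(e^{-tH}\mathbf 1\bigr)(0)$ itself evaluated via $\le e^{-\lambda_1 t}\sum_k e^{-(\lambda_k-\lambda_1)t}\langle\varphi_k,\mathbf 1\rangle^2$. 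This still needs a handle on $\sum_k e^{-(\lambda_k-\lambda_1)t}$, which is where the factor $(1+(\lambda_1 t)^{d/2})$ will come from: comparing the Dirichlet eigenvalues of $H^{\mathrm D}_\omega(B_t)$ with those of $-\tfrac12\Delta$ on $B_t$ (since $V_\omega\ge0$) via $\lambda_k(H^{\mathrm D}_\omega)\ge\lambda_k(-\tfrac12\Delta^{\mathrm D}_{B_t})$, and using Weyl-type counting $\#\{k:\lambda_k(-\tfrac12\Delta^{\mathrm D}_{B_t})\le\mu\}\le C(1+\mu t^2)^{d/2}$, a summation by parts gives $\sum_k e^{-\lambda_k t}\le C t^{-d/2}(1+(\lambda_1 t)^{d/2})e^{-\lambda_1 t}$ after absorbing constants, because $\lambda_1 t\gtrsim 1/t$ so $\mu\approx\lambda_1$ dominates the count and the Gaussian decay beyond scale $1/t$ in $\mu$ produces the $t^{-d/2}$.

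Assembling the pieces: the in-box term is bounded by $C\,t^{-d/4}\cdot\bigl(t^{-d/2}(1+(\lambda_1 t)^{d/2})e^{-\lambda_1 t}\bigr)^{1/2}$, and squaring-and-simplifying (or rather organizing the Cauchy--Schwarz the other way so that one full factor $e^{-\lambda_1 t}$ survives) yields a bound of the form $c_2\,(1+(\lambda_1 t)^{d/2})\,e^{-\lambda_1 t}$ with $\lambda_1=\lambda^{\mathrm D}_{\omega,1}(B_t)$; adding the exit-term error $e^{-c_3 t}$ gives exactly \eqref{cont-spec}. The polynomial prefactor is harmless because in the later application it is eaten by the $(1+o(1))$ in the exponent.

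The step I expect to be the main obstacle is making the spectral/heat-kernel bound fully rigorous under merely $V_\omega\in K_{d,\mathrm{loc}}$ rather than bounded $V_\omega$: one needs that $e^{-tH^{\mathrm D}_\omega(B_t)}$ is indeed trace class with a kernel, that the eigenfunction expansion converges pointwise at $0$, and that the diagonal heat kernel domination $p^{B_t,V_\omega}_t(0,0)\le p^{\R^d}_t(0,0)$ holds — all of which follow from Kato-class semigroup theory (boundedness of $e^{-tH_\omega}$ from $L^1$ to $L^\infty$, ultracontractivity, and the Feynman--Kac representation), but must be cited carefully; this is precisely why Assumption~\ref{Kato} was stated slightly stronger than needed for measurability. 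The other mild nuisance is getting the eigenvalue-counting constant uniform in $t$ (it is, since rescaling $B_t$ to the unit box turns $-\tfrac12\Delta^{\mathrm D}_{B_t}$ into $-\tfrac{1}{2t^2}\Delta^{\mathrm D}_{B_1}$, whose counting function is explicitly $\#\{k:\pi^2|n|^2/(2t^2)\cdot\!\le\mu\}$ up to the box size normalization), but this is routine.
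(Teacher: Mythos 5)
Your decomposition is exactly the paper's: split on the exit time from $(-t,t)^d$, kill the exit event with the reflection principle to get the additive $e^{-c_3 t}$, and bound the surviving in-box mass $\bigl(e^{-tH^{\mathrm D}_{\omega}(B_t)}\mathbf 1\bigr)(0)$ by a spectral estimate in terms of $\lambda^{\rm D}_{\omega,1}\bigl((-t,t)^d\bigr)$. The only difference is that the paper takes that spectral estimate off the shelf (it is (3.1.9), p.~93 of Sznitman's book, valid under the Kato-class assumption), whereas you try to re-derive it — and that re-derivation, as written, does not close: your Cauchy--Schwarz splits the weight $e^{-\lambda_k t}$ symmetrically and so can only ever produce $e^{-\lambda_1 t/2}$, which you acknowledge but do not repair ("organizing the Cauchy--Schwarz the other way"). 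The standard repair, and essentially Sznitman's proof, is the three-fold semigroup factorization $e^{-tH}=e^{-aH}e^{-(t-2a)H}e^{-aH}$ with $a\asymp 1/\lambda_1\wedge t$: the middle factor contributes the full $e^{-\lambda_1(t-2a)}$ by the $L^2\to L^2$ spectral bound, the outer factors are controlled by $\|e^{-aH}\|_{2\to\infty}\le c\,a^{-d/4}$ (domain monotonicity plus $V_{\omega}\ge 0$), and optimizing over $a$ is what produces the prefactor $1+(\lambda_1 t)^{d/2}$ — not the Weyl-counting argument you sketch, which again runs into the volume of the box. So: right architecture, identical to the paper's; if you do not want to cite Sznitman, replace the Cauchy--Schwarz step by the factorization above.
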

\begin{proof}
Let $\tau$ denote the exit time of the process from $(-t, t)^d$. 
Then, by the reflection principle, we have 
\begin{equation}
\begin{split}
	u_{\omega}(t, 0) &\le E_0 \left[\exp\set{-\int_0^t V_{\omega}(X_s)\,ds}; \tau > t\right]
	+ P_0 (\tau \le t)\\
	&\le E_0 \left[\exp\set{-\int_0^t V_{\omega}(X_s)\,ds}; \tau > t\right]
	+ e^{-c_3 t}. 
\end{split}
\end{equation}
Now,~\eqref{cont-spec} follows immediately from~(3.1.9) in p.93 of~\cite{Szn98} 
under Assumption~\ref{Kato}. 
\end{proof}
Due to this lemma, it suffices for~\eqref{eq-upper} to obtain the almost sure lower 
bound for the principal eigenvalue $\lambda^{\rm D}_{\omega,\, 1}\bigl((-t, t)^d\bigr)$. 
We use the following inequality for the integrated density of states 
\begin{equation}
\begin{split}
	N^{\rm D}(\lambda) & \ge \frac{1}{(2R)^d} 
	\E \bigl[\#\bigl\{ k \in \N ;\lambda_{\omega,\,k}\bigl((-R, R)^d\bigr) \le \lambda \bigr\}\bigr]\\
	& \ge \frac{1}{(2R)^d} \P\bigl(\lambda^{\rm D}_{\omega,\, 1}\bigl((-R, R)^d\bigr) \le \lambda\bigr), \label{IDS-lower}
\end{split}
\end{equation}
which holds for any $\lambda > 0$ and $R > 0$. 
The first inequality is an easy application of the so-called ``Dirichlet--Neumann bracketing'' 
and can be found in~\cite{CL90}, (VI.15) in p.311. 
Now, fix $\e > 0$ arbitrarily and let $\lambda = (1-\e) \psi (d\log t)^{-1}$ and $R = t$. 
Then it follows from~\eqref{IDS-lower} and~\eqref{ass-upper} that 
\begin{equation}
 \begin{split}
	\P & \left( \lambda^{\rm D}_{\omega,\, 1}\bigl((-t, t)^d\bigr) \le (1-\e) \psi (d\log t)^{-1} \right) \\
	& \le (2t)^d \exp\set{- \phi((1-\e)^{-1} \psi (d\log t)) (1+o(1))}\\
	& = 2^d t^{d-d/(1-\e)^{L}(1+o(1))}\\
	& \le t^{-\delta(\e)}
 \end{split}
\end{equation}
for some $\delta(\e) > 0$ when $t$ is sufficiently large. 
This right-hand side is summable along the sequence $t_k = e^k$ and therefore Borel-Cantelli's 
lemma shows  
\begin{equation}
	\lambda^{\rm D}_{\omega,\, 1}\bigl((-t_k, t_k)^d\bigr) \ge (1-\e) \psi (d\log t_k)^{-1}
\end{equation} 
except finitely many $k$, $\P$-almost surely. 
We can extend this bound for all large $t$ as follows: 
since $\psi (d\log t)$ is slowly varying in $t$, we have 
\begin{equation}
 \begin{split}
	\lambda^{\rm D}_{\omega,\, 1}\bigl((-t, t)^d\bigr) 
	& \ge \lambda^{\rm D}_{\omega,\, 1}\bigl((-t_k, t_k)^d\bigr) \\
	& \ge (1-\e) \psi (d\log t_k)^{-1} \\
	& \ge (1-2\e) \psi (d\log t)^{-1} 
 \end{split}
\end{equation} 
for $t_{k-1} \le t \le t_k$ when $k$ is sufficiently large. 
Combined with Lemma~\ref{lem-upper}, this proves the upper bound~\eqref{eq-upper}. 
%
%section 4%%%%%%%%%%%%%%%%%%%%%%%%%%%%%%%%%%%%%%%
\section{Proof of the Lower bound}\label{lower}
We take $\kappa = 1/2$ and $x = 0$ again. Also, we only consider the continuous case. 
As in the proof of the upper bound, the principal eigenvalue plays a key role. 
Let us write $\lambda^{\rm N}_{k}(U)$ for the $k$-th smallest eigenvalue 
of $-1/2\Delta$ in $U$ with the Neumann boundary condition. 
Then we have another inequality for the integrated density of states 
\begin{equation}
\begin{split}
	N^{\rm D}(\lambda) & \le \frac{1}{(2R)^d}
	\E \bigl[\#\bigl\{ k \in \N ;\lambda^{\rm N}_{\omega,\,k}\bigl((-R, R)^d\bigr) \le \lambda \bigr\}\bigr]\\
	& \le \frac{1}{(2R)^d} \#\bigl\{ k \in \N ;\lambda^{\rm N}_{k}((-R, R)^d) \le \lambda \bigr\}
	\P \bigl(\lambda_{\omega,\, 1}^{\rm N}\bigl((-R, R)^d\bigr) \le \lambda \bigr)\\
	& \le c_4 \P \bigl(\lambda^{\rm N}_{\omega,\, 1}\bigl((-R, R)^d\bigr) \le \lambda\bigr), \label{D-N2}
\end{split}
\end{equation}
which holds for any $\lambda \in (0,1)$ and $R > 0$. 
The first inequality can be found in~\cite{CL90} again, (VI.16) in p.~331, and 
the third one is a consequence of the classical Weyl asymptotics for the free Laplacian, 
see e.g.~Proposition~2 in Section~XIII.15 of~\cite{RS78IV}. 
For arbitrary $\e>0$, let $\lambda = (1+\e) \psi (d\log t)^{-1}$. 
Then, using~\eqref{D-N2} and~\eqref{ass-lower}, we find 
\begin{equation}
\begin{split}
	\P \bigl(\lambda^{\rm N}_{\omega,\, 1}\bigl((-R, R)^d\bigr) > \lambda \bigr)
	& \le 1- c_4^{-1} N^{\rm D} ((1+\e) \psi (d\log t)^{-1})\\
	& \le 1- c_4^{-1}(2t)^{-d/(1+\e)^L (1+o(1))}\\
	& \le 1-t^{-d+\delta(\e)}\label{C-wise}
\end{split}
\end{equation}
for some $\delta(\e)>0$ when $t$ is sufficiently large. 

Now we introduce some notations to proceed the proof. 
Let us fix a positive number 
\begin{equation}
	M > \frac{1}{\alpha} + \frac{2}{\beta} + \frac{1}{L} 
\end{equation}
and define
\begin{align}
	\I &= \bigl(-t/(\log t)^{M}, t/(\log t)^{M}\bigr)^d \cap (\log t)^{M} \Z^d, \\
	C_i &= i + \bigl(0, \psi(d \log t)^{1/2}\bigr)^d \quad (i \in \I). 
\end{align}
Note that $\min_{i \neq j} \d(C_i, C_j) > \diam (C_i)$ and both of them go to infinity as $t \to \infty$. 
Therefore, by using~\eqref{C-wise} and Assumption~\ref{correlation} recursively, we obtain 
\begin{equation}
\begin{split}
	\P & \bigl(\lambda^{\rm N}_{\omega,\, 1}(C_i) > (1+\e)\psi(d \log t)^{-1} \textrm{ for all } i \in \I \bigr)\\
	& \le \prod_{i \in \I} \P\bigl(\lambda^{\rm N}_{\omega,\, 1}(C_i) > (1+\e)\psi(d \log t)^{-1}\bigr)
	 +\exp\bigl\{ -(\log t)^2 \bigr\}\\
	& \le (1-t^{-d+\delta(\e)})^{t^d(\log t)^{-2dM}}
	 +\exp\bigl\{ -(\log t)^2 \bigr\}\\
	& \le \exp\{ -t^{\delta(\e)} (\log t)^{-2dM} \}
	 +\exp\bigl\{ -(\log t)^2 \bigr\}
\end{split}
\end{equation}
for sufficiently large $t$. Since the right hand side is summable in $t \in \N$, 
Borel-Cantelli's lemma tells us that $\P$-almost surely, 
\begin{equation}
	\textrm{there exists } i \in \I \textrm{ such that }
	\lambda^{\rm N}_{\omega,\, 1}(C_i) \le (1+\e)\psi(d \log t)^{-1}\label{N-upper}
\end{equation}
for all large $t \in \N$. 
The next lemma translates~\eqref{N-upper} to an upper bound for the Dirichlet eigenvalue:
%lemma2%%%%%%%%%%%%%%%%%%%%%%%%%%%%%%%%%%%%%%%%%%%
\begin{lem}\label{lem2}
There exists a constant $c_1>1$ such that $\P$-almost surely, 
\begin{equation}
	\textrm{there exists } i \in \I \textrm{ such that }
	\lambda^{\rm D}_{\omega,\, 1}(C_i) \le c_1 \psi(d \log t)^{-1}\label{D-upper}
\end{equation}
for all large $t$. 
\end{lem}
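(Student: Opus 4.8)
The plan is to pass from the Neumann eigenvalue bound \eqref{N-upper} to a Dirichlet eigenvalue bound on a slightly smaller box, at the cost of enlarging the constant from $1+\e$ to some $c_1 > 1$. The obstacle is of course that a low Neumann eigenvalue does not by itself force a low Dirichlet eigenvalue: the Neumann ground state need not respect any boundary condition. The standard remedy is to combine the Neumann bound with a separate, deterministic fact coming from the moment condition (Assumption~\ref{moment}): $V_{\omega}$ cannot be too large on too large a region. So the proof splits into (a) a deterministic functional-analytic step, and (b) a Borel--Cantelli step using Assumption~\ref{moment} to control the sup of $V_\omega$.

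First I would record the deterministic step. Fix $i \in \I$ with $\lambda^{\rm N}_{\omega,1}(C_i) \le (1+\e)\psi(d\log t)^{-1}$, and let $v$ be the associated Neumann ground state, normalized in $L^2(C_i)$. Choosing a cutoff function $\chi$ that is $1$ on the sub-box $C_i'$ obtained by shrinking $C_i$ by a layer of width $\eta \psi(d\log t)^{1/2}$ (for a small fixed $\eta$) and $0$ near $\partial C_i$, with $\|\nabla\chi\|_\infty \le c/(\eta\,\psi(d\log t)^{1/2})$, the function $\chi v$ is a legitimate Dirichlet test function on $C_i'$. Plugging it into the Rayleigh quotient for $H_\omega$ on $C_i'$ with Dirichlet boundary conditions, the IMS-type localization identity gives
\begin{equation}
\langle \chi v, H_\omega \chi v\rangle
= \langle v, H_\omega v\rangle_{C_i} - \langle v^2, \tfrac12|\nabla\chi|^2\rangle
\le \Bigl((1+\e) + \tfrac{c}{\eta^2 d}\Bigr)\psi(d\log t)^{-1},
\end{equation}
since both $\langle v,H_\omega v\rangle_{C_i} \le \lambda^{\rm N}_{\omega,1}(C_i)$ and the gradient term scale like $\psi(d\log t)^{-1}$ (the extra $V_\omega$-term in $H_\omega$ only helps, as $V_\omega \ge 0$). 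Hence $\langle \chi v, H_\omega \chi v\rangle \le c_1'\,\psi(d\log t)^{-1}$ for a constant $c_1'$ depending only on $\e,\eta,d$. The only remaining issue is that $\|\chi v\|_{L^2}$ might be small, which would spoil the Rayleigh quotient; here is where we need the complementary input.

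Second, the $L^2$-mass control. By the moment condition and Borel--Cantelli, $\P$-a.s.\ $\sup_{x \in C_i}V_\omega(x) \le (\log t)^{1/\alpha + o(1)}$ uniformly over $i \in \I$ for all large $t$ (there are at most $t^d$ boxes $C_i$, each a union of $O((\log t)^{dM/2})$ unit cells, so a union bound over $\E[\sup_{[0,1]^d}\exp\{V_\omega^\alpha\}] < \infty$ via Chebyshev closes this). Now suppose for contradiction that $\|\chi v\|_{L^2}^2 \le 1/2$; then $v$ puts at least half its mass in the boundary layer $C_i \setminus C_i'$, which has volume $O(\eta\,\psi(d\log t)^{(d+1)/2})$. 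But then
\begin{equation}
\lambda^{\rm N}_{\omega,1}(C_i)
= \langle v, H_\omega v\rangle_{C_i}
\ge \int_{C_i\setminus C_i'} V_\omega v^2 \ge 0,
\end{equation}
which is not yet a contradiction — so instead I would argue directly: choose the shrinking width not as a fixed fraction but tune it so that the boundary layer volume is negligible compared with $|C_i|$, e.g.\ width $\psi(d\log t)^{1/2}/(\log\log t)$; then by a Poincaré/Neumann-eigenfunction regularity estimate (the Neumann ground state of $-\frac12\Delta + V_\omega$ with $V_\omega$ bounded by $(\log t)^{1/\alpha+o(1)}$ on a box of side $\psi(d\log t)^{1/2} = (\log t)^{-(1/L+o(1))/2}$ cannot concentrate in an exponentially thin shell, since its eigenvalue is only $(\log t)^{1/L+o(1)}$), one gets $\|\chi v\|_{L^2}^2 \ge 1/2$ for large $t$. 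With this, $\lambda^{\rm D}_{\omega,1}(C_i') \le \langle\chi v,H_\omega\chi v\rangle/\|\chi v\|^2 \le 2c_1'\psi(d\log t)^{-1} =: c_1\psi(d\log t)^{-1}$. Finally, since $C_i' \subset C_i$ and $\I' := \{i \in \I\}$ indexes the same boxes, relabeling $C_i := C_i'$ (or noting $\diam(C_i')$ still goes to infinity and $\min\d(C_i',C_j') > \diam(C_i')$ is preserved) yields exactly \eqref{D-upper}. The main obstacle, as flagged, is the $L^2$-mass non-concentration estimate; everything else is routine IMS localization plus a union bound.
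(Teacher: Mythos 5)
Your overall strategy is the paper's: cut off the Neumann ground state near $\partial C_i$ and use it as a Dirichlet test function, so that the numerator of the Rayleigh quotient is $O(\psi(d\log t)^{-1})$ and only the denominator $\|\chi v\|_{L^2}^2$ needs a lower bound. But that denominator bound is exactly where your argument has a genuine gap, and you say so yourself: you first set up a contradiction that you admit does not close, and then appeal to an unspecified ``Poincar\'e/Neumann-eigenfunction regularity estimate'' asserting that $v$ cannot concentrate in the boundary layer. Nothing you invoke (neither the Neumann eigenvalue being small nor the a.s.\ bound on $\sup V_\omega$ from Assumption~\ref{moment}) implies this: an $L^2$-normalized function with small Rayleigh quotient can in principle put most of its mass in a thin shell, and smallness of the shell's \emph{volume} alone proves nothing about the eigenfunction's \emph{mass} there. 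The ingredient that closes the gap is the uniform sup-norm bound on the ground state, $\|\phi_i^{\rm N}\|_\infty \le c\,\lambda^{\rm N}_{\omega,1}(C_i)^{d/4}$ (an ultracontractivity estimate valid under Assumption~\ref{Kato} alone, (3.1.55) in~\cite{Szn98}); since the layer of relative width $\e$ has volume $O(\e\,\psi(d\log t)^{d/2})$ and $\lambda^{\rm N}_{\omega,1}(C_i)^{d/2}\psi(d\log t)^{d/2}=O(1)$, the mass in the layer is $O(\e)$ and $\|\chi v\|_2^2\ge 1-c\e$. No control on $\sup V_\omega$ is needed for this lemma (the paper uses its Lemma~\ref{lem3} only later, for the heat-kernel lower bound).

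A second, concrete error: your fallback of shrinking the cutoff layer to width $\psi(d\log t)^{1/2}/\log\log t$ destroys the numerator estimate, since then $\|\nabla\chi\|_\infty^2\sim(\log\log t)^2\psi(d\log t)^{-1}$ and the gradient term is no longer $O(\psi(d\log t)^{-1})$; the resulting ``constant'' $c_1$ would diverge. The layer width must stay a fixed small fraction $\e$ of the side length, with $\e$ chosen as an absolute constant at the end, which is only consistent if you have the $O(\e)$ mass bound above. Finally, the relabeling $C_i:=C_i'$ at the end is unnecessary: $\chi v$ is compactly supported in $C_i$, hence already admissible for the Dirichlet problem on $C_i$ itself.
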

\begin{proof}
We choose $C_i$ ($i \in \I$) for which $\lambda^{\rm N}_{\omega,\, 1}(C_i) \le (1+\e)\psi(d \log t)^{-1}$. 
This is possible for large $t \in \N$ by~\eqref{N-upper} and then it also holds for all large $t$ 
with slightly larger $\e$ by regularly varying property of $\psi$. 
Let $\phi_i^{\rm N}$ denote the $L^2$-normalized nonnegative eigenfunction corresponding to 
$\lambda^{\rm N}_{\omega,\, 1}(C_i)$ and 
$\partial_{\e} C_i$ ($i \in \I$) the set 
\begin{equation}
	\{ x \in C_i ; \d (x,\partial C_i) < \e \psi(d \log t)^{1/2} \}.
\end{equation}
We further take a nonnegative function $\rho \in C_c^1(C_i)$ which satisfies
\begin{equation}
	\rho = 1 \textrm{ on } C_i \setminus \partial_{\e} C_i \quad \textrm{and} \quad 
	\| \nabla \rho \|_{\infty} < 2 \e^{-1} \psi(d \log t)^{-1/2}.\label{rho} 
\end{equation}
Such a function can easily be constructed by a standard argument using mollifier. 
Substituting $\rho\phi_i^{\rm N}$ to the variational formula for the principal eigenvalue, 
we obtain 
\begin{equation}
	\lambda^{\rm D}_{\omega,\, 1}(C_i) \le \frac{1}{\|\rho\phi_i^{\rm N}\|_2^2}
	\int_{C_i} |\nabla (\rho\phi_i^{\rm N})|^2(x) + V_{\omega}(x)(\rho\phi_i^{\rm N})^2(x)\,dx.\label{variational}
\end{equation}
To bound the right hand side, we first use the uniform bound on eigenfunctions 
$\|\phi_i^{\rm N}\|_{\infty} \le c_5 \lambda^{\rm N}_{\omega,\, 1}(C_i)^{d/4}$
(see e.g.\ (3.1.55) in p.107 of~\cite{Szn98}) to see 
\begin{equation}
	\|\rho\phi_i^{\rm N}\|_2^2 \ge \int_{C_i \setminus \partial_{\e} C_i} \phi_i^{\rm N}(x)^2 \,dx 
	\ge 1-c_6\e.
\end{equation}
Next, it is clear from~\eqref{rho} and the above uniform bound that
\begin{equation}
\begin{split}
	\int_{C_i} & |\nabla (\rho\phi_i^{\rm N})|^2(x) + V_{\omega}(x)(\rho\phi_i^{\rm N})^2(x)\,dx\\
	& \le 2\int_{C_i} |\nabla \phi_i^{\rm N}|^2(x) + V_{\omega}(x)\phi_i^{\rm N}(x)^2\,dx 
	+ 2\int_{C_i} |\nabla \rho |^2(x) \phi_i^{\rm N}(x)^2\, dx\\
	& \le (2+8c_6\e^{-1})\psi(d \log t)^{-1}.
\end{split}
\end{equation}
Taking $\e = (2c_6)^{-1}$ and plugging these bounds into~\eqref{variational}, 
the result follows. 
\end{proof}

We also need the following almost sure upper bound. 
%lemma3%%%%%%%%%%%%%%%%%%%%%%%%%%%%%%%%%%%%%%%%%%%
\begin{lem}\label{lem3}
Under Assumption~\ref{moment}, we have $\P$-almost surely, 
\begin{equation}
	\sup_{x \in (-t, t)^d}V_{\omega}(x) \le (3d \log t)^{1/\alpha} 
\end{equation} 
for sufficiently large $t$. 
\end{lem}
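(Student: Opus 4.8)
The plan is to bound the supremum of $V_\omega$ over the box $(-t,t)^d$ by decomposing it into unit cubes and applying the moment condition in Assumption~\ref{moment} together with a Markov-type estimate and Borel--Cantelli. First I would cover $(-t,t)^d$ by the $O(t^d)$ integer-translated unit cubes $k + [0,1]^d$ with $k \in \Z^d \cap (-t-1,t)^d$, so that
\begin{equation}
	\P\Bigl( \sup_{x \in (-t,t)^d} V_\omega(x) > (3d\log t)^{1/\alpha} \Bigr)
	\le \sum_{k} \P\Bigl( \sup_{x \in k+[0,1]^d} V_\omega(x) > (3d\log t)^{1/\alpha} \Bigr).
\end{equation}
By stationarity each summand equals $\P\bigl( \sup_{x \in [0,1]^d} V_\omega(x)^\alpha > 3d\log t \bigr)$, and an exponential Markov inequality bounds this by $e^{-3d\log t}\,\E\bigl[\sup_{x\in[0,1]^d}\exp\{V_\omega(x)^\alpha\}\bigr]$, which is a finite constant times $t^{-3d}$. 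Summing over the $O(t^d)$ cubes gives a bound of order $t^{-2d}$, which is summable along $t \in \N$.

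Then Borel--Cantelli shows that $\P$-almost surely, for all sufficiently large integers $t$, $\sup_{x\in(-t,t)^d} V_\omega(x) \le (3d\log t)^{1/\alpha}$. To pass from integer $t$ to all large real $t$, I would note that $(-t,t)^d \subset (-\lceil t\rceil, \lceil t\rceil)^d$ and $(3d\log\lceil t\rceil)^{1/\alpha} \le (3d\log t)^{1/\alpha}(1+o(1))$; absorbing the $o(1)$ by a harmless enlargement of the constant (or observing the inequality in the statement is already stated only for ``sufficiently large $t$'' with a safe constant $3d$ rather than the borderline $d$) closes the gap. This monotonicity step is essentially free because the event is increasing in the box and the threshold is increasing in $t$.

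I do not anticipate a genuine obstacle here; the only mild subtlety is making sure the constant $3$ in $3d\log t$ is comfortably larger than the value $1$ that would make the sum over cubes barely diverge — one needs the exponent $3d$ to beat the cube count $O(t^d)$ with room to spare for summability, and $3d$ does this (any constant strictly greater than $2d$, i.e.\ anything making the exponent $>2$ after accounting for the $t^d$ factor, works; $3d$ gives $t^{-2d}$). The use of the full strength of Assumption~\ref{moment} — the finiteness of the exponential moment of $\sup_{x\in[0,1]^d}V_\omega(x)^\alpha$ rather than just a polynomial moment — is exactly what converts the Markov bound into the stretched-logarithmic threshold $(\log t)^{1/\alpha}$, so this assumption is used in an essential way and cannot be weakened without changing the conclusion.
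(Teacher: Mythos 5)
Your proposal is correct and follows essentially the same route as the paper: a union bound over the $O(t^d)$ unit cubes covering the box, stationarity, the exponential Chebyshev/Markov inequality using Assumption~\ref{moment} to get a summable $O(t^{-2d})$ bound, then Borel--Cantelli along integers and monotonicity to pass to all large $t$ (the paper simply works with the slightly larger box $(-2t,2t)^d$ at integer times to make that last step immediate).
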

\begin{proof}
By Chebyshev's inequality, 
\begin{equation}
\begin{split}
	\P & \biggl(\sup_{x \in (-2t,2t)^d} V_{\omega}(x) > (3d \log t)^{1/\alpha} \biggr)\\
	& \le (4t)^d \P \biggl(\sup_{x \in [0,1]^d} V_{\omega}(x) > (3d \log t)^{1/\alpha} \biggr)\\
	& \le 4^d t^{-2d}\, \E\biggl[\sup_{x \in [0,1]^d}\exp\{V_{\omega}(x)^{\alpha}\}\biggr].
\end{split}
\end{equation}  
Since the last expression is summable in $t \in \N$, the claim follows by Borel-Cantelli's lemma 
and monotonicity of $\sup_{x \in (-t, t)^d}V_{\omega}(x)$ in $t$. 
\end{proof}

Now, we can finish the proof of the lower bound. 
We pick $\omega$ for which Lemma~\ref{lem2} and   Lemma~\ref{lem3} holds. 
Then we can find a box $C_i$ ($i \in \I$) satisfying 
\begin{equation}
	\lambda^{\rm D}_{\omega,\, 1}(C_i) \le c_1 \psi(d \log t)^{-1}\label{ev-upper}
\end{equation}
for sufficiently large $t$. 
Let $\phi_i^{\rm D}$ denote $L^2$-normalized nonnegative eigenfunction 
associated with $\lambda^{\rm D}_{\omega,\, 1}(C_i)$. 
It is easy to see that there exists a box $q+[0, 1]^d \subset C_i$ ($q \in \Z^d$) such that 
\begin{equation}
	\| \phi_i^{\rm D}\|_{\infty} \int_{q+[0, 1]^d} \phi_i^{\rm D}(x) dx 
	\ge \int_{q+[0, 1]^d} \phi_i^{\rm D}(x)^2 dx 
	\ge \frac{1}{2}\psi (d\log t)^{-d}. \label{mass}
\end{equation}
We also know the following uniform upper bound: 
\begin{equation}
	\| \phi_i^{\rm D}\|_{\infty} \le c_5 \lambda^{\rm D}_{\omega,\, 1}(C_i)^{d/4}\label{unif}
\end{equation}
from (3.1.55) in~\cite{Szn98}. 
%Our basic strategy is as follows: 
%(i) we first make Brownian motion rush into the box $q+[0,1]^d$; 
%(ii) then force it to stay in $_i$ for the remaining time. 
Let us recall that the semigroup generated by $H_{\omega}$ has the 
kernel $p_{\omega}(s,x,y)$ under Assumption~\ref{Kato} (see Theorem B.7.1 in~\cite{Sim82}). 
We can bound this kernel from below by using the Dirichlet heat kernel $p_{(-t, t)^d}(s,x,y)$ 
in $(-t, t)^d$ as follows: 
\begin{equation}
\begin{split}
	p_{\omega}(s,0,y) 
	&\ge \exp\Bigl\{-s \sup_{x \in (-t, t)^d}V_{\omega}(x)\Bigr\} p_{(-t, t)^d}(s,0,y)\\
	& \ge c_7 s^{-d/2}\exp\bigl\{-s (3d \log t)^{1/\alpha} - c_8{|y|^2}/{s} \bigr\} 
	\quad \textrm{if} \quad |y|<t/2,  
\end{split}
\end{equation}
where the second inequality follows by Lemma~\ref{lem3} and a Gaussian lower bound for 
the Dirichlet heat kernel in~\cite{vdB92}. 
Taking $s=t/(\log t)^M$ and noting that $|q| < 2s$, we arrive at 
\begin{equation}
	\inf_{y \in q+[0, 1]^d} p_{\omega}(s,0,y) 
	\ge \exp\{ -c_8 s/2 \} \label{kernel}
\end{equation}
for sufficiently large $t$. 

Plugging~\eqref{ev-upper}--\eqref{kernel} into an obvious inequality, 
we arrive at
\begin{equation}
\begin{split}
	u_{\omega}(t, 0) & = \int_{\R^d} p_{\omega}(t,0,x) dx\\
	& \ge \int_{\R^d}\int_{q+[0,1]^d} p_{\omega}(s, 0, y) p_{\omega}(t-s, y, x)  
	\frac{\phi_i^{\rm D}(x)}{\|\phi_i^{\rm D} \|_{\infty}}dydx\\
	& \ge \frac{1}{\|\phi_i^{\rm D}\|_{\infty}} \exp\bigl\{ -\lambda^{\rm D}_{\omega,\, 1}(C_i)t-c_8 s/2 \bigr\}
	\int_{q+[0, 1]^d} \phi_i^{\rm D}(x) dx\\
	& \ge c_9 \psi (\log t)^{-3d/2} \exp\{ -c_1 t/\psi(d \log t)-c_8 s/2 \}, 
\end{split}
\end{equation}
where in the third line, we have replaced $p_{\omega}$ by the kernel of the semigroup 
generated by $H_{\omega}$ with the Dirichlet boundary condition outside $C_i$. 
This completes the proof of the lower bound of Theorem~\ref{thm-lower} 
since $s=t/(\log t)^{M}$ was chosen to be $o(t/\psi(\log t))$. 

%section5%%%%%%%%%%%%%%%%%%%%%%%%%%%%%%%%%%%%%%%%%%%
\section{Examples}\label{examples}
We apply our results to two models in this section. 
The first is the Brownian motion in the Poissonian obstacles, where we see that our 
result recovers the correct upper bound. 
The second is the Brownian motion in a perturbed lattice traps introduced in~\cite{F3}, 
for which the quenched result is new. 

%subsection5.1%%%%%%%%%%%%%%%%%%%%%%%%%%%%%%%%%%%%%%%%%%%
\subsection{Poissonian obstacles}\label{Poisson}
Let us consider the standard Brownian motion ($\kappa = 1/2$) killed by 
the random potential of the form
\begin{equation}
	V_{\omega}(x) = \sum_{i} W(x-\omega_i), 
\end{equation}
where $(\omega=\sum_i \delta_{\omega_i}, \P_{\nu})$ is a Poisson point process 
with intensity $\nu > 0$ and $W$ is a nonnegative, bounded, and compactly supported function. 
As is mentioned in Section~\ref{intro}, Sznitman proved in~\cite{Szn93c} 
the quenched asymptotics for this model: 
\begin{equation}
	\P_{\nu} \textrm{-a.s.}\quad 
	u_{\omega}(t, 0) = \exp\set{-c(d,\nu) t/(\log t)^{2/d} (1+o(1))}
	\quad \textrm{as} \quad t \to \infty, \label{Poi-quench}
\end{equation} 
where $c(d,\nu) = \lambda_d  (\nu\omega_d / d)^{2/d} $ with $\lambda_d$ denoting the principal 
Dirichlet eigenvalue of $-1/2\Delta$ in $B(0,1 )$ and $\omega_d=| B(0,1) |$. 

We can recover the upper bound by using classical Donsker-Varadhan's result~\cite{DV75c} 
and Theorem~\ref{thm-upper}. 
Indeed, the above potential clearly satisfies Assumption~\ref{Kato} and 
the asymptotics of the integrated density of states 
\begin{equation}
	\log N^{\rm D}(\lambda) \sim -\nu \omega_d \lambda_d^{d/2}
	\lambda^{-d/2} \quad \textrm{as} \quad \lambda \to 0 \label{Lif-Poi}
\end{equation}
has been derived by Nakao~\cite{Nak77} by applying an exponential Tauberian theorem 
to Donsker-Varadhan's asymptotics 
\begin{equation}
	\E[u_{\omega}(t, 0)] = \exp\set{-\tilde{c}(d,\nu) t^{\frac{d}{d+2}} (1+o(1))} 
	\quad \textrm{as} \quad t \to \infty 
\end{equation}
with 
\begin{equation}
	\tilde{c}(d,\nu)=\frac{d+2}{2}(\nu \omega_d)^{\frac{2}{d+2}}
	\Bigl(\frac{2\lambda_d}{d}\Bigr)^{\frac{d}{d+2}}.
\end{equation}
Now an easy computation shows that the asymptotic inverse of the right hand side 
of~\eqref{Lif-Poi} is 
\begin{equation}
	\psi(\lambda) = \lambda_d ^{-1} (\nu\omega_d)^{-2/d} \lambda^{2/d}
\end{equation}
and then Theorem~\ref{thm-upper} proves the upper bound in~\eqref{Poi-quench}. 

\begin{rem}
	In this case, the lower bound given by Theorem~\ref{thm-lower} is not sharp 
	as is obvious from the statement. (In the proof, we lose the precision in Lemma~\ref{lem2}.) 
	However, the lower bound can be complemented by a rather direct and simple argument 
	in the Poissonian soft obstacles case, see~\cite{Szn93c}. 
	So our argument simplifies the harder part. 
\end{rem}

%subsection5.2%%%%%%%%%%%%%%%%%%%%%%%%%%%%%%%%%%%%%%%%%%%
\subsection{Perturbed lattice traps}\label{PL}
In this subsection, we use our results to derive the quenched asymptotics for 
the model introduced in~\cite{F3}. 
We consider the standard Brownian motion ($\kappa = 1/2$) killed by the potential 
of the form 
\begin{equation}
	V_{\omega}(x) = \sum_{q \in \Z^d} W(x-q-\omega_q), 
\end{equation}
where $(\{\omega_q\}_{q \in\Z^d}, \P_{\theta})$ ($\theta > 0$) is a collection of 
independent and identically distributed random vectors with density 
\begin{equation}
	\P_{\theta}(\omega_q \in dx) = N(d, \theta)\exp\bigl\{ -|x|^{\theta} \bigr\}dx 
\end{equation}
and $W$ is a nonnegative, bounded, and compactly supported function. 
The author has derived the annealed asymptotics for this model in~\cite{F3} and 
also proved the following Lifshitz tail effect as a corollary: 
 \begin{equation*}
  \begin{split}
   \log N^{\rm D}(\lambda) \asymp _{\lambda \to 0}
   \left\{
   \begin{array}{lr}
    -\lambda^{-1-\frac{\theta}{2}}\left( \log \frac{1}{\lambda} \right)^
    {-\frac{\theta}{2}} &(d=2),\\[8pt]
    -\lambda^{-\frac{d}{2}-\frac{\theta}{d}}  &(d \ge 3),
   \end{array}\right.
  \end{split}
 \end{equation*}
where $f(x) \asymp_{x \to *} g(x)$ means 
$0<\liminf_{x \to *}f(x)/g(x)\le\limsup_{x \to *}f(x)/g(x)<\infty$. 

We can prove the quenched asymptotics from this result. 
\begin{thm}
For any $\theta > 0$ and $x \in \R^d$, we have 
\begin{equation}
 \begin{split}
	\log u_{\xi}(t, x) \asymp_{t \to \infty} \left\{
	\begin{array}{lr}
	- t \, (\log t)^{-\frac{2}{2+\theta}}(\log \log t)^{-\frac{\theta}{2+\theta}} &\quad (d=2),\\[8pt]
	-t \, (\log t)^{-\frac{2d}{d^2+2\theta}}  &\quad (d \ge 3),
	\end{array}\right.\label{Lif-PL}
 \end{split}
\end{equation} 
with $\P_{\theta}$-probability one. 
\end{thm}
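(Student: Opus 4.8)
The plan is to apply Theorems~\ref{thm-upper} and~\ref{thm-lower} directly to the model at hand, with the quoted Lifshitz tail asymptotics supplying the hypothesis~\eqref{ass-upper} and~\eqref{ass-lower}. First I would recast the two-sided bound $\log N^{\rm D}(\lambda) \asymp_{\lambda\to 0} -h(\lambda)$, where $h(\lambda) = \lambda^{-1-\theta/2}(\log(1/\lambda))^{-\theta/2}$ for $d=2$ and $h(\lambda) = \lambda^{-d/2-\theta/d}$ for $d\ge 3$, into the form required by the theorems. Since the theorems demand a regularly varying $\phi$ with $N^{\rm D}(\lambda) = \exp\{-\phi(1/\lambda)(1+o(1))\}$, I would note that the $\asymp$ (with possibly different constants in the $\liminf$ and $\limsup$) only gives $\log N^{\rm D}(\lambda) = -\phi(1/\lambda)(\Theta(1))$ rather than $(1+o(1))$. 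This mismatch is the first thing to address: the clean statement~\eqref{Lif-PL} is itself only an $\asymp$ bound, so I would instead apply the theorems with two different regularly varying envelopes — a lower envelope $\phi_-$ for which~\eqref{ass-upper} holds and an upper envelope $\phi_+$ for which~\eqref{ass-lower} holds — and observe that $\phi_\pm$ differ only by multiplicative constants, hence have the same index $L$ and asymptotic inverses $\psi_\pm$ differing by constants. This is harmless because~\eqref{Lif-PL} already tolerates an unspecified constant.

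Next I would identify the Lifshitz exponent $L$ and compute the asymptotic inverse $\psi$ of $\phi$ in each dimension. For $d\ge 3$, $\phi(s) = s^{d/2+\theta/d}$ up to constants, so $L = d/2+\theta/d = (d^2+2\theta)/(2d)$, and $\psi(\lambda) = \lambda^{1/L} = \lambda^{2d/(d^2+2\theta)}$ up to constants. Plugging into the conclusions of Theorems~\ref{thm-upper} and~\ref{thm-lower} gives $\log u_\omega(t,x) \asymp -t/\psi(d\log t) \asymp -t\,(\log t)^{-2d/(d^2+2\theta)}$, which is exactly the $d\ge 3$ line of~\eqref{Lif-PL}. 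For $d=2$, $\phi(s) = s^{1+\theta/2}(\log s)^{-\theta/2}$ is regularly varying with index $L = 1+\theta/2 = (2+\theta)/2$; its asymptotic inverse is $\psi(\lambda) = \lambda^{1/L}(\log\lambda)^{\theta/(2L)}$ up to constants, i.e.\ $\psi(\lambda)\asymp \lambda^{2/(2+\theta)}(\log\lambda)^{\theta/(2+\theta)}$ — this inversion of a regularly-varying-times-slowly-varying function is a standard computation (see de Bruijn's formula). Then $t/\psi(d\log t) \asymp t\,(\log t)^{-2/(2+\theta)}(\log\log t)^{-\theta/(2+\theta)}$, matching the $d=2$ line.

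The remaining work is verifying Assumptions~\ref{Kato}, \ref{moment}, and~\ref{correlation} for the perturbed-lattice potential $V_\omega(x) = \sum_{q\in\Z^d} W(x-q-\omega_q)$. Assumption~\ref{Kato} is immediate because $W$ is bounded with compact support, so $V_\omega$ is locally bounded and hence in $K_{d,\mathrm{loc}}$; I would also note that only finitely many terms contribute near any point since the displacements, although unbounded, are a.s.\ locally finite in the relevant sense — more carefully, one should check that $V_\omega(x)<\infty$ a.s.\ locally uniformly, which follows from the stretched-exponential tail of $|\omega_q|$ making $\sum_q W(x-q-\omega_q)$ locally finite. For Assumption~\ref{moment}, the sup of $V_\omega$ over a unit box is controlled by $\|W\|_\infty$ times the number of points $q$ with $q+\omega_q$ within $\mathrm{diam}(\supp W)$ of the box; this count has at worst a stretched-exponential (in fact much lighter) tail from the bound $\P_\theta(|\omega_q|>R) \le e^{-cR^\theta}$, so $\E[\sup_{x\in[0,1]^d}\exp\{V_\omega(x)^\alpha\}]<\infty$ for any small $\alpha$ — I would pick $\alpha$ below $\min(1,\theta)$ or so and estimate the expectation by a union/Poisson-type bound over shells. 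Assumption~\ref{correlation} is the main obstacle: I need exponential-in-$r$ approximate independence of the events $E_k(\lambda) = \{\lambda^{\rm N}_{\omega,1}(A_k)\le\lambda\}$ for boxes separated by distance $r$. Because the $\{\omega_q\}$ are genuinely i.i.d., the only source of correlation is that a trap originally at lattice site $q$ can be displaced into a distant box $A_k$; so $\lambda^{\rm N}_{\omega,1}(A_k)$ is \emph{approximately} a function of $\{\omega_q : \mathrm{dist}(q,A_k)\le r/2\}$, and the error is the event that some $\omega_q$ with $q$ far from $A_k$ lands near $A_k$, which has probability at most (number of far sites) $\times\, e^{-c(r/2)^\theta} \le e^{-r^\beta}$ for any $\beta<\theta$ after absorbing the polynomial factor. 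I would make this rigorous by conditioning on the displacements of nearby sites, coupling two configurations that agree on those sites, and using monotonicity of the Neumann ground-state eigenvalue in the potential to bound the symmetric difference of the events. Assembling these verifications with the $d=2$ and $d\ge 3$ computations of $\psi$ completes the proof.
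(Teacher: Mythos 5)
Your proposal is correct and follows essentially the same route as the paper: apply Theorems~\ref{thm-upper} and~\ref{thm-lower} after computing the asymptotic inverse $\psi$ of the Lifshitz tail, and verify Assumption~\ref{correlation} by isolating the stretched-exponentially unlikely event that some lattice point is displaced across the separation distance $r$ (the paper formalizes this with explicit good events $E_1, E_2$ on which the Neumann eigenvalues are exactly independent, rather than your coupling/monotonicity phrasing, and it cites Lemma~11 of~\cite{FU09} for Assumption~\ref{moment} where you sketch the Bernoulli-count estimate). Your explicit treatment of the $\asymp$ versus $(1+o(1))$ mismatch via two regularly varying envelopes $\phi_\pm$ is a point the paper passes over silently, and is a worthwhile clarification.
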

\begin{proof}
The Assumption~\ref{Kato} is clearly satisfied since $V_{\omega}$ is locally bounded 
almost surely. Hence the upper bound readily follows by computing the asymptotic 
inverse of~\eqref{Lif-PL} and using Theorem~\ref{thm-upper}. 
To use Theorem~\ref{thm-lower}, we have to verify Assumptions~\ref{moment} and~\ref{correlation}. 
The former is rather easy and can be found in Lemma~11 in~\cite{FU09}. 
The latter is verified as follows: we first fix $r_0 > 0$ sufficiently large so that 
$\supp W \subset B(0, r_0/4)$. For $r>r_0$ and boxes $\{A_k\}_{1 \le k \le n}$ as in 
Assumption~\ref{correlation}, let us define events 
\begin{align}
	E_1 & \stackrel{\rm def}{=}
	\bigl\{\textrm{for all } q \in \Z^d \textrm{ with }\d(q, A_1) \le r/2, 
	\d(q+\omega_q, A_1) \le 3r/4\bigr\}, \\
	E_2 & \stackrel{\rm def}{=}
	\bigl\{\textrm{for all } q \in \Z^d \textrm{ with }\d(q, A_1) \ge r/2, 
	\d(q+\omega_q, A_1) \ge r/4 \bigr\}. 
\end{align}
Then, $\lambda^{\rm N}_{\omega,\, 1}(A_1) $ and $\{\lambda^{\rm N}_{\omega,\, 1}(A_k) \}_{2 \le k \le n} $ 
are mutually independent  on $E_1 \cap E_2$ thanks to our choice of $r_0$. 
Therefore, the left hand side of~\eqref{ass-3} is bounded by $\P_{\theta}(E_1^c)+\P_{\theta}(E_2^c)$. 
Let us denote the $s$-neighborhood of $A_1$ by $N_s(A_1)$. 
The first term is estimated as 
\begin{equation}
\begin{split}
	\P_{\theta}(E_1^c) &\le \P_{\theta}\bigl(|\omega_q| \ge r/4
	\textrm{ for some } q \in \Z^d \cap N_{r/2}(A_1)\bigr)\\
	& \le N(d, \theta) \# \bigl\{q \in \Z^d \cap N_{r/2}(A_1)\bigr\} 
	\int_{|x| \ge r/4}\exp\bigl\{ -|x|^{\theta }\bigr\} dx\\
	& \le N(d, \theta) r^d \exp\bigl\{ -(r/8)^{\theta}\bigr\} 
\end{split}
\end{equation}
for large $r$, where we have used $\diam (A_1) < r$ in the last line. 
Next, we bound the second term $\P_{\theta}(E_2^c)$. 
Using the distribution of $\omega_q$, we have 
\begin{equation}
\begin{split}
	\P_{\theta}(E_2^c) &= \P_{\theta}\bigl(q+\omega_q \in N_{r/4}(A_1)
	\textrm{ for some } q \in \Z^d \setminus N_{r/2}(A_1)\bigr)\\
	& \le N(d, \theta) \sum_{q \in \Z^d \setminus N_{r/2}(A_1)} 
	\int_{N_{r/4}(A_1)}\exp\bigl\{ -|x-q|^{\theta }\bigr\} dx\\
	& \le N(d, \theta) r^d \sum_{q \in \Z^d \setminus N_{r/2}(A_1)}
	\exp\bigl\{ -\d (q, N_{r/4}(A_1))^{\theta}\bigr\}. 
\end{split}
\end{equation}
We can assume by shift invariance that $A_1$ is centered at the origin. 
We divide the sum into two parts $\{|q| \le r\}$ and $\{|q| > r\}$. 
The former part of the sum is bounded by 
\begin{equation}
\begin{split}
	\# &\bigl\{q \in \Z^d \cap B(0,r)\bigr\} \sup_{q \in \Z^d \setminus N_{r/2}(A_1)} 
	\exp\bigl\{ -\d (q, N_{r/4}(A_1))^{\theta}\bigr\}\\ 
	&\le c_{10}r^d \exp\bigl\{-(r/4)^{\theta}\bigr\}. 
\end{split}
\end{equation}
For the latter part, we use the fact that $N_{r/4}(A_1) \subset B(0, 3r/4)$, 
which follows from the assumption $\diam (A_1)<r$. 
By using this fact, we find 
\begin{equation}
	\d (q, N_{r/4}(A_1)) \ge |q|-3r/4 > |q|/4 
	\quad \textrm{for} \quad |q| > r 
\end{equation}
and therefore 
\begin{equation}
	\sum_{q \in \Z^d \setminus N_{r/2}(A_1), \,|q|>r} \exp\bigl\{ -\d (q, N_{r/4}(A_1))^{\theta}\bigr\}
	\le \sum _{q \in \Z^d, \,|q| > r} \exp\bigl\{ -|q/4|^{\theta}\bigr\}. 
\end{equation}
It is not difficult to see that this right hand side is bounded by $\exp\{ -(r/8)^{\theta}\}$ 
for sufficiently large $r$. Combining all the estimates, we arrive at 
\begin{equation}
	\P_{\theta}(E_1^c) + \P_{\theta}(E_2^c) 
	\le N(d, \theta) r^d \bigl(2+c_{10}r^d\bigr)\exp\bigl\{ -(r/8)^{\theta}\bigr\}
\end{equation}
for large $r$, which verifies Assumption~\ref{correlation}. 
\end{proof}

%Ackknowledgement%%%%%%%%%%%%%%%%%%%%%%%%%%%%%%%%%
\section*{Acknowledgement} 
The author is grateful to professor Naomasa Ueki for useful conversations 
on the theory of random Schr\"{o}dinger operators. 
He also would like to thank professor Wolfgang K\"{o}nig for helpful discussions. 

\newcommand{\noop}[1]{}

\end{document}